\newcommand{\m}{\mathfrak m}
\newtheorem{thm}{Theorem}
\newtheorem{lemma}[thm]{Lemma}
\newtheorem{cor}[thm]{Corollary}
\newtheorem{prop}[thm]{Proposition}
\begin{document}

\title{{\bf Type sequences of one-dimensional
local analytically irreducible rings}}
\author{
{Valentina Barucci}\\
Dip.\! di Matematica,
Universit\`a di Roma ``La Sapienza'',\\
P.le Aldo Moro 2, Roma-00185, Italy\\
e-mail:\,{\tt barucci@mat.uniroma1.it}\\
{Ioana Cristina \c Serban}\\
e-mail:\,{\tt  ioanase@yahoo.com}}
\date{}
\maketitle

\begin{abstract}
We  extend the notion of type sequence to rings that are not 
necessarily residually rational. Using this invariant we characterize 
different types of rings as almost Gorenstein rings and rings of  maximal 
length.  
\end{abstract}

\section{Introduction}
\label{introduction}
Let $(R,\mathfrak m)$ be a one-dimensional local Cohen-Macaulay ring 
and let $\overline R$ be the integral closure of R in 
its field of quotients. If we assume that $R$ is 
analytically irreducible, i.e. that $\overline R$ is a DVR 
(with a valuation $v$) and a finitely generated $R$-module, 
then the values of the elements of $R$ form 
a numerical semigroup 
$v(R)= \{v(a)\ |\ a \in R, a \neq 0\}=\{s_0=0,s_1,\dots,
s_{r-1},s_r,\to\}$, where $s_0 < s_1< \dots <
s_r $ and any integer $x$, $x \geq s_r$ is in $v(R)$ 
and the conductor $C=(R: \overline R) $ is not zero.
 
If we further assume that $R$ is residually rational, 
i.e. that $k$, the residue field of $R$, is isomorphic to the 
residue field of $\overline R$, 
then $r=\ell_R(R/C)$ and a sequence of $r$ 
natural numbers $(t_1, \dots, t_r)$ 
is naturally associated to $R$, 
$t_i= \ell_R(\mathfrak a_i^{-1}/\mathfrak a_{i-1}^{-1})$, 
where $\mathfrak a_i=\{x\in R\arrowvert\;v(x)\geq s_i\}$. 
This sequence of natural numbers associated to the ring was 
for the first time considered by Matsuoka in \cite{mat}.  
As in \cite{bdofo} we call the sequence $(t_1, \dots, t_r)$ 
the type sequence of $R$, t.s.$(R)$ for short. 
In particular the length $t_1=\ell_R(\mathfrak m^{-1}/R)$ is 
the Cohen Macaulay type of $R$ and it turns out that 
$\sum_{i=1}^{r} t_i=\ell_R(\overline R/R)$. 
A typical example of an analytically irreducible and residually rational ring 
is the ring of an algebraic curve singlularity with one branch. 
 
It is well known that, for each one-dimensional local Cohen Macaulay ring 
with finite integral closure, the length $\ell_R(\overline R/R)$ 
is bounded under and above in the following way:
$$\ell_R(R/C)+t-1 \leq \ell_R(\overline R/R)\leq \ell_R(R/C)t$$
where again $C =(R: \overline R)$ and $t $ is the CM type of $R$. 

The first inequality depends on the existence of a canonical ideal 
(cf. \cite[Lemma 19 (e)]{bf}) and the second is proved in 
\cite[Theorem 1]{bh}. If the ring $R$ is Gorenstein, 
i.e. of CM type 1, then the two inequalities become equalities and 
$\ell_R(R/C)=\ell_R(\overline R/R)$. 
The rings which realize the minimal length for $\ell_R(\overline R/R)$, 
i.e. such that $\ell_R(R/C)+t-1 = \ell_R(\overline R/R)$ 
have been introduced in \cite{bf} with the name of almost Gorenstein 
and recently revealed an interest in a geometric context (cf.\cite{km}). 
On the other hand the rings which realize the maximal length for 
$\ell_R(\overline R/R)$ were characterized in \cite{bh} and also studied in \cite{de}.
 
In the analytically irreducible and residually rational case, 
there is a strict relation between the type sequence of $R$ and the length 
$\ell_R(\overline R/R)$. It is not surprising that the   almost Gorenstein rings 
are characterized by a type sequence of the form $(t,1,1, \dots,1)$ 
and those which realize the maximal length are characterized by a type 
sequence of the form $(t,t, \dots, t)$,  cf. \cite {bdofo} and \cite [Theorem 
1.7]{danna}. 
  
This paper deals with the analytically irreducible non-residually rational case. 
We have still a numerical semigroup $v(R)$ of values, 
but $k$, the residue field of $R$, is not isomorphic to $K$, 
the residue field of $\overline R$. 
The almost Gorenstein rings are characterized by a type sequence of 
the form $(t, n_1, \dots, n_{r+l})$ and the rings of maximal length   
by a  type sequence of the form $(t, tn_1, \dots, tn_{r+l})$, 
where $n_i$ are the dimensions 
of opportune $k$-vector subspaces of $K$. 

As usual, if $\mathfrak a$ and $\mathfrak b$ are fractional ideals of $R$, 
then $\mathfrak a :\mathfrak b:=\{ x \in Q(R)\ |\ x\mathfrak b \subseteq \mathfrak a\}$, 
where $Q(R)$ is the field of quotients of $R$, 
$\mathfrak a^{-1}=R:\mathfrak a$ and $\mathfrak a$ is divisorial if 
$R:(R:\mathfrak a))=\mathfrak a$.

\section{The result}
\label{an.ir.}

In all this paper  $R$ is a one-dimensional local analytically irreducible not necessarily residually rational ring. So the integral closure $\overline R$ is a DVR and $R$ has an associated semigroup of values:
 
\begin{equation}
v(R)=\{s_0=0,s_1,\dots,
s_{r-1},s_r=c,\to\},
\end{equation}
Denote by $X$ the generator of the maximal ideal of $\overline R$ and 
define the {\it conductor} of the ring as the
natural number $N$ such that $R:\overline R=X^N\overline R$. Note 
that $N\geq c$, thus we can set  $N=s_{r+l}=s_r+l=c+l$ for some $l\in
\mathbb N$. 
In the residually rational case 
we have  $N=c$.
Thus in order to extend  the definition of the type sequence to the non-residually rational case, some 
care is needed.
  As we shall see, in the general case the ``right''
definition will consist of a sequence of $r+l$ numbers. 

Let us see the details. Consider the ideals of $R$ defined as
\begin{equation}
\mathfrak a_i=\{x\in
R\arrowvert\;v(x)\geq s_i\},\;i\in \{0,\dots,r+l\}.
\end{equation}
It is evident that $\mathfrak a_0=R$, $\mathfrak a_1=\m$ and $\mathfrak
a_{r+l}=R:\overline R$. Moreover, we have the following chain of inclusions:
\begin{equation}
\mathfrak a_{r+l}\subset \dots \subset \mathfrak a_0=\mathfrak 
a_0^{-1}=R\subseteq \mathfrak
a_1^{-1} \dots \subseteq \mathfrak a_{r+l}^{-1},
\end{equation}
 Note that whereas on the left side we have strict inclusions, on the right 
side, {\it a priori}, some of the inclusions could be equalities.

  The following facts about the ideals ${\mathfrak a}_i$ are well known, but 
we recall them for the convenience of the reader: 
\begin{prop}
For every $i\in \{0,\dots,r+l\}$, the ideals defined above
have the following properties.
\begin{itemize}
\item[1.] $\mathfrak a_{r+l}^{-1}=\overline R$;
\item[2.] $\mathfrak a_i$ is divisorial.
\item[3.] If $i>0$ then $\mathfrak a^{-1}_{i}\neq \mathfrak a^{-1}_{i-1}$ 
and hence $\ell_R(\mathfrak a_i^{-1}/\mathfrak a_{i-1}^{-1})\geq 1$.
\end{itemize}
\end{prop}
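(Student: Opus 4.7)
The plan is to first unpack the definition: for each $i$ one has the handy reformulation $\mathfrak a_i = R\cap X^{s_i}\overline R$, since $v(x)\geq s_i$ is equivalent to $x\in X^{s_i}\overline R$. This identification drives all three items.

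For item~1, note that $\mathfrak a_{r+l}$ is exactly the conductor $X^N\overline R$, which lies inside $R$ by definition of $N$. The inclusion $\overline R\subseteq R:X^N\overline R$ is immediate. Conversely, if $xX^N\overline R\subseteq R$, then $xX^N\overline R$ is an $\overline R$-submodule of $R$; since $X^N\overline R=R:\overline R$ is by construction the largest $\overline R$-submodule sitting inside $R$, we deduce $xX^N\overline R\subseteq X^N\overline R$, whence $x\in\overline R$.

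For item~2, I would exhibit $\mathfrak a_i$ as the intersection of two divisorial fractional $R$-ideals: $R$ (trivially divisorial) and $X^{s_i}\overline R$. One first checks that $X^{s_i}\overline R$ really is an $R$-fractional ideal (for instance $X^N\cdot X^{s_i}\overline R\subseteq X^N\overline R\subseteq R$), and then, using the same ``largest $\overline R$-submodule'' principle as in item~1, computes directly $R:X^{s_i}\overline R=X^{N-s_i}\overline R$ and, applying the argument once more, $R:X^{N-s_i}\overline R=X^{s_i}\overline R$. The conclusion then follows from the formal lemma that the intersection of two divisorial ideals is divisorial, which is immediate from the monotonicity of the double-dual operator together with $\mathfrak c\subseteq(\mathfrak c^{-1})^{-1}$.

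Item~3 is then an immediate corollary. Any $a\in R$ with $v(a)=s_{i-1}$ lies in $\mathfrak a_{i-1}\setminus\mathfrak a_i$, giving the strict inclusion $\mathfrak a_i\subsetneq\mathfrak a_{i-1}$; divisoriality established in item~2 upgrades this to a strict inclusion of the duals, for otherwise taking duals once more would identify $\mathfrak a_i$ with $\mathfrak a_{i-1}$. The only mildly subtle point in the whole argument is the characterization of the conductor as the largest $\overline R$-submodule of $R$, which has to be invoked explicitly both in item~1 and in the computation of $R:X^{s_i}\overline R$; beyond this, the proofs are formal manipulations that work identically in the residually rational and non-residually rational settings.
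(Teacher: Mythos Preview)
Your proof is correct and follows essentially the same route as the paper: both write $\mathfrak a_i = R\cap X^{s_i}\overline R$, argue that $X^{s_i}\overline R$ is a divisorial fractional $R$-ideal, conclude item~2 from the stability of divisoriality under intersection, and derive item~3 from item~2 by double-dualizing. The only cosmetic difference is that the paper handles item~1 by the one-line identity $R:X^N\overline R = X^{-N}(R:\overline R)=X^{-N}X^N\overline R=\overline R$, and for item~2 observes that $\overline R$, being equal to the dual $R:\mathfrak a_{r+l}$, is automatically divisorial (hence so is every $X^h\overline R$), rather than invoking the ``largest $\overline R$-submodule'' description of the conductor to compute the double dual explicitly.
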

\begin{proof} 
$1.$ As $\mathfrak a_{r+l}=X^N\overline R$, we have
\begin{eqnarray}\nonumber
\mathfrak a_{r+l}^{-1} &=& R:X^N\overline R=X^{-N}(R:\overline R)\\
&=& X^{-N}X^N\overline R=\overline R.
\end{eqnarray} 

$2.$ As $\overline R=R:\mathfrak a_{r+l}^{-1}$, we have that 
$\overline R$ is divisorial as a fractional ideal of $R$. It follows 
that $X^h\overline R$ is divisorial for every $h\in \mathbb N$.
This shows that $\mathfrak a_i$ is divisorial, as
\begin{equation}
\mathfrak a_i=R\cap X^{s_i}\overline R.
\end{equation}

$3.$ If $i>0$, then both $\mathfrak a_i$ and $\mathfrak a_{i-1}$ are divisorial.
Thus, if   $R:\mathfrak a_{i-1}= R:\mathfrak a_i$,  then $\mathfrak a_{i-1}=
R:(R:\mathfrak a_{i-1})=R:(R:\mathfrak a_i)=\mathfrak a_i$, which is
contradiciton.
\end{proof}

Now we are ready to give our definition of type sequence of the ring 
$R$. For every $i\in \{1,2,\dots,r+l\}$ let 
\begin{equation}
t_i(R):=\ell_R(\mathfrak a_i^{-1}/\mathfrak a_{i-1}^{-1}).
\end{equation}
We call the sequence of numbers $(t_1(R),t_2(R),\dots,t_{r+l}(R))$ the 
{\it type sequence} of $R$, and we denote it by ${\rm t.s.}(R)$.

As in the residually rational case we have that
\begin{equation}
t_1(R)=\ell_R(\m^{-1}/R)=: t(R)
\end{equation}
which is the Cohen-Macaulay type of $R$.

Note that for every $1\leq i \leq r+l$, $\mathfrak m 
{\mathfrak a}_{i-1}\subseteq {\mathfrak a}_i$ and so $\mathfrak 
a_{i-1}/\mathfrak a_i$, is a $k$-vector space; let us  
denote it by $V_R(s_{i-1})$). Since the inclusion ${\mathfrak a}_{i } \subset  {\mathfrak a}_{i-1}$ is strict, 
$V_R(s_{i-1})\neq {0}$ and hence the number  $n_{i-1}:={\rm
dim}_kV_R(s_{i-1})$ is a positive integer. These vector spaces were 
considered also in \cite {cdk} and can be defined not only for the ring $R$ 
but also for 
any fractional ideal of  $R$. Let $F$ be such an 
ideal and $i\in \mathbb N$. Then
\begin{equation}
F(i):=\{x\in F|v(x)\geq i\}
\end{equation}
is a fractional ideal of $R$  and we have 
$F(i)\subseteq F(j)$ for every $i\geq j$. The $R-$modules $F(i)/F(i+1)$ 
are also vector spaces over $k$, and we denote them by  $V_F(i)$. 

As we have outlined in the introduction,   these vector spaces 
are very important for studying lengths for the analytically ireducible rings 
which are not residually 
rational. If $E \subseteq F$ are fractional ideals of $R$, then, in the residually rational case, $\ell_R(F/E)=\#\{ v(F) \setminus v(E)\} $, cf. \cite[Proposition 1] {mat}. In the non-residually 
rational case we use the dimensions of the previous defined  vector spaces 
as it was proved in 
\cite{s}. 
 
\begin{prop}{\rm \cite[Proposition 11]{s}} 
\label{prop:lungnotres}
Let $E$ and $F$ be two fractional ideals of $R$ such that  
$E\subseteq F \subseteq \overline R$. Then there exists an $s \in \mathbb N$
such that
\begin{equation}
\ell_R(F/E)=\sum_{r=0}^{s-1}[{\rm dim}_k(V_F(r))-{\rm dim}_k(V_E(r))].
\end{equation} 
\end{prop}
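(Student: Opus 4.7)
The plan is to reduce the computation of $\ell_R(F/E)$ to a telescoping sum over the valuation filtrations of $F$ and $E$, and to exploit the given hypothesis $E \subseteq F \subseteq \overline R$ to make the two filtrations ``line up.''

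First I would observe that $F(0) = F$ and $E(0) = E$, because every element of $\overline R$ has nonnegative valuation. Next, since $F$ and $E$ are fractional ideals of $R$, each contains a nonzero ideal of $R$, hence a module of the form $X^a \overline R$; combined with $F, E \subseteq \overline R$ this means that for $r$ sufficiently large one has $F(r) = E(r) = X^r \overline R$. Fix such an integer $s$; then $\ell_R(F(s)/E(s)) = 0$.

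The core of the argument is the identity
\begin{equation}
\ell_R(F(r)/E(r)) - \ell_R(F(r+1)/E(r+1)) = \dim_k V_F(r) - \dim_k V_E(r),
\end{equation}
valid for every $r \geq 0$. To prove it, note that since $E \subseteq F$ one has $E(r+1) = E(r) \cap F(r+1)$, so the inclusion $E(r) \hookrightarrow F(r)$ induces an injection $V_E(r) = E(r)/E(r+1) \hookrightarrow F(r)/F(r+1) = V_F(r)$. Chasing the two short exact sequences
\begin{equation}
0 \to F(r+1)/E(r+1) \to F(r)/E(r+1) \to V_F(r) \to 0
\end{equation}
and
\begin{equation}
0 \to V_E(r) \to F(r)/E(r+1) \to F(r)/E(r) \to 0
\end{equation}
and using additivity of length together with $\ell_R$ of a $k$-vector space equaling its $k$-dimension, the identity follows.

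Finally, I would telescope: summing the identity for $r = 0, 1, \dots, s-1$ gives
\begin{equation}
\ell_R(F/E) - \ell_R(F(s)/E(s)) = \sum_{r=0}^{s-1}\bigl[\dim_k V_F(r) - \dim_k V_E(r)\bigr],
\end{equation}
and the left-hand term vanishes by the choice of $s$. The only subtle step is the identity above; the rest is bookkeeping with filtrations, so I expect the snake-lemma style verification to be where a careful proof really earns its keep, but once $E(r+1) = E(r) \cap F(r+1)$ is in hand there is no real obstacle.
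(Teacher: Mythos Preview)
Your argument is correct: the identification $E(r+1)=E(r)\cap F(r+1)$ is immediate from the definitions, the two short exact sequences are the standard filtration dévissage, and the telescoping is clean. Note, however, that the paper does not supply its own proof of this proposition; it merely quotes it as \cite[Proposition~11]{s}, so there is no in-paper argument to compare against. Your filtration-and-telescope approach is in fact the natural (and presumably the original) way to establish such a length formula.
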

We recall also another  result which in this form appears in \cite{s} and 
in fact it is an adapted version  of \cite[Proposition 3.5] {cdk}. Observe that if $V$ and $W$ are two $k$-vector subspaces of $K$, where $k \subseteq K$ is a field extension, then $(V:W):=\{x \in K\ |\ xW\subseteq V\}$ is also a $k$-vector subspace of $K$.  
\begin{lemma}{\rm \cite[Lemma 3]{s}} 
\label{lem:dim}
Let $k\subseteq K$ be an extension of fields with 
$n={\rm dim}_{k}K<\infty$ 
and let $V\subset K$ be  an $n-1$-dimensional $k$-vector subspace of $K$. Then 
for every $k$-vector subspace $W\subseteq K$ we have 
\begin{equation}
{\rm dim}_k(V:W)+{\rm dim}_k(W)=n.
\end{equation}
\end{lemma}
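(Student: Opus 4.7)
My plan is to reduce the lemma to the dimension formula for a non-degenerate bilinear pairing on a finite-dimensional vector space. The key observation is that since $V$ has codimension $1$ in $K$, the multiplication of $K$ descends to a pairing with values in the $1$-dimensional space $K/V$, and the ideal quotient $(V:W)$ is precisely the annihilator of $W$ under this pairing.

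Concretely, I would fix any $k$-linear isomorphism $\pi: K/V \to k$ and define the $k$-bilinear form
$$b: K \times K \longrightarrow k, \qquad b(x,y) := \pi(xy+V).$$
The form $b$ is symmetric (because $K$ is commutative) and, by construction,
$$(V:W) \;=\; \{x \in K : b(x,w)=0 \text{ for every } w\in W\} \;=\; W^{\perp}.$$
Thus, once $b$ is known to be non-degenerate, the standard identity $\dim_k W^{\perp} = \dim_k K - \dim_k W$ for non-degenerate pairings on finite-dimensional spaces yields exactly the desired equation.

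The only non-trivial step, and the one I expect to be the crux of the argument, is the non-degeneracy of $b$. Here the hypothesis that $K$ is a \emph{field} is essential. Suppose $x\in K$ satisfies $b(x,y)=0$ for every $y\in K$; equivalently, $xK\subseteq V$. If $x\neq 0$, multiplication by $x$ is a $k$-linear bijection of $K$ (injectivity because $K$ is a domain, surjectivity then by finite-dimensionality), so $xK=K$, contradicting $V\subsetneq K$. Hence $x=0$, proving non-degeneracy. All remaining steps are routine linear algebra.
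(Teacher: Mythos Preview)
Your argument is correct. The paper itself does not supply a proof of this lemma: it is quoted from \cite[Lemma~3]{s} (noted there as an adaptation of \cite[Proposition~3.5]{cdk}), so there is no in-paper proof to compare against. For what it is worth, your reduction to the dimension formula for the orthogonal complement under the bilinear form $b(x,y)=\pi(xy+V)$ is the natural approach, and the key step---that $xK\subseteq V$ forces $x=0$ because a nonzero element of $K$ acts bijectively by multiplication---is precisely where the hypothesis that $K$ is a field (and not merely a finite $k$-algebra) is used.
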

In order to prove our main theorem, we need the next result on the 
dimensions of 
previous defined vector spaces related to the fractional 
ideals $\mathfrak a_i^{-1}$, $i\in \{1, \dots r+l\}$.
\begin{lemma}
\label{lem:aidim}
Let $n={\rm dim}_kK$, where $k$ is the residue field of $R$ and $K$ is the residue field of $\overline R$. Then:
\begin{itemize}
\item[1.]
${\rm {dim}}_k(V_{\mathfrak a_{i}^{-1}}(N-1-s_{i-1}))=n$;
\item[2.]
${\rm {dim}}_k(V_{\mathfrak a_{i-1}^{-1}}(N-1-s_{i-1}))\leq n-n_{i-1}$;
\end{itemize}
\end{lemma}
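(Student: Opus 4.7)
The plan is to embed both $V_{\mathfrak a_i^{-1}}(N-1-s_{i-1})$ and $V_{\mathfrak a_{i-1}^{-1}}(N-1-s_{i-1})$ as $k$-subspaces of $K$ via the standard map $z \mapsto \overline{X^{-(N-1-s_{i-1})}z}$, which immediately gives the upper bound $\leq n$ for both. It then remains to supply a matching lower bound for item~(1) and a sharper upper bound for item~(2).

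For (1), the key observation is that $X^{N-1-s_{i-1}}\overline R \subseteq \mathfrak a_i^{-1}$: for $y \in X^{N-1-s_{i-1}}\overline R$ and $a \in \mathfrak a_i$, one has $v(a) \geq s_i \geq s_{i-1}+1$, so $v(ya) \geq N$ and hence $ya \in X^N\overline R = R:\overline R \subseteq R$. Intersecting with the valuation condition yields $X^{N-1-s_{i-1}}\overline R \subseteq \mathfrak a_i^{-1}(N-1-s_{i-1})$ and similarly $X^{N-s_{i-1}}\overline R \subseteq \mathfrak a_i^{-1}(N-s_{i-1})$, which induces an injection of $X^{N-1-s_{i-1}}\overline R/X^{N-s_{i-1}}\overline R \cong K$ into $V_{\mathfrak a_i^{-1}}(N-1-s_{i-1})$. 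Dimension $n$ follows.

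For (2), write $W := V_R(s_{i-1})$ and $T := V_R(N-1)$ as subspaces of $K$, so $\dim_k W = n_{i-1}$. The main step is to identify $V_{\mathfrak a_{i-1}^{-1}}(N-1-s_{i-1})$, viewed in $K$, with the colon subspace $T:W$. To establish this I would analyze when $y = X^{N-1-s_{i-1}}u$, for $u \in \overline R$, lies in $\mathfrak a_{i-1}^{-1}$. For $a \in \mathfrak a_{i-1}$ with $v(a) > s_{i-1}$ (i.e. $a \in \mathfrak a_i$) the product $ya$ already has $v(ya) \geq N$ and is automatically in $R$, so only elements of valuation exactly $s_{i-1}$ impose constraints: writing such $a = X^{s_{i-1}}b$ with $b \in \overline R^*$, the classes $\bar b$ trace out $W$, and $ya = X^{N-1}(ub) \in R$ is equivalent, modulo $X^N\overline R \subseteq R$, to $\bar u \bar b \in T$. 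Hence $y \in \mathfrak a_{i-1}^{-1}$ iff $\bar u W \subseteq T$, so the identification holds.

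Finally, the minimality of $N$ as conductor forces $X^{N-1}\overline R \not\subseteq R$, so $T$ is a proper subspace of $K$ and thus sits inside some hyperplane $H \subseteq K$. Then $T:W \subseteq H:W$, and Lemma~\ref{lem:dim} applied to $H$ yields $\dim_k(H:W) = n - n_{i-1}$, which is the desired bound. I expect the delicate point to be the identification $V_{\mathfrak a_{i-1}^{-1}}(N-1-s_{i-1}) = T:W$: one must carefully separate the elements of $\mathfrak a_{i-1}$ of valuation exactly $s_{i-1}$ (which impose the actual constraint on $\bar u$) from those of higher valuation (whose contribution is automatic), in order to recognize the resulting condition as precisely a colon in $K$.
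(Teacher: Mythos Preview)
Your proof is correct and follows essentially the same approach as the paper: for part~(1) you show $X^{N-1-s_{i-1}}\overline{R}\subseteq\mathfrak a_i^{-1}$ exactly as the paper does, and for part~(2) you identify $V_{\mathfrak a_{i-1}^{-1}}(N-1-s_{i-1})$ with the colon subspace $V_R(N-1):V_R(s_{i-1})$ in $K$ and then bound it via a hyperplane using Lemma~\ref{lem:dim}, which is precisely the paper's argument. Your write-up is in fact a bit more explicit than the paper's (you spell out the case split on $v(a)$ and justify why $V_R(N-1)\subsetneq K$), but there is no substantive difference.
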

\begin{proof}
Let us prove the first assertion.
Fix an $i\in \{1,\dots,r+l\}$. Then for any $\gamma \in K$ we have that 
\begin{equation}  
\label{eqn:tslow2}
\gamma X^{N-1-s_{i-1}}\mathfrak {a_{i}}\subseteq
X^{N-1+(s_i-s_{i-1})}\overline{R} \subseteq X^N\overline{R}\subseteq R.
\end{equation}
Therefore we have 
\begin{equation}  
\label{eqn:tslow3}
\gamma X^{N-1-s_{i-1}}\in R :\mathfrak {a_{i}}=\mathfrak {a_i}^{-1},
\end{equation}
for every $\gamma \in K$. Thus $KX^{N-1-s_{i-1}}\subseteq  R:
\mathfrak {a_{i}}$. It  follows that $V_{\mathfrak 
{a_i}^{-1}}(N-1-s_{i-1})\simeq K$ and this is of dimension $n$ over $k$. 

Now we can prove the second assertion.
It is easy to see that 
\begin{equation}
\label{eqn:tslow1}
\gamma X^{N-1-s_{i-1}} {\mathfrak {a_{i-1}}}\subseteq R
\Longleftrightarrow
\gamma V_R(s_{i-1}) \subseteq V_R(N-1)
\end{equation}
which is of course further equivalent to
$\gamma\in {V_R(N-1):V_R(s_{i-1})}$.

Thus we have that:
\begin{equation}
\label{eqn:tslow2}
\gamma \in V_{\mathfrak a_{i-1}^{-1}}(N-1-s_{i-1}) 
\Longleftrightarrow
\gamma \in ( V_R(N-1):V_R(s_{i-1}))
\end{equation}

Then we can conclude that:
\begin{equation}
{\rm dim}_k(V_{\mathfrak {a_{i-1}^{-1}}}(N-1-s_{i-1}))={\rm
dim}_k({V_R(N-1)}:{V_R(s_{i-1})}).
\end{equation}

As ${V_R(N-1)}$ is a proper subspace of $K$, we can find a $k$-vector subspace $U\subset
K$ of codimension $1$
such that ${V_R(N-1)} \subseteq U$ and so
\begin{eqnarray}\nonumber
{\rm dim}_k({V_R(N-1)}:{V_R(s_{i-1})}) &\leq& {\rm
dim}_k(U:{V_R(s_{i-1})})\\
\nonumber &=&  n-{\rm dim}_k{(V_R(s_{i-1})})=\\
&& =n-n_{i-1},
\end{eqnarray} 
where for the first equality 
we have used  Lemma \ref{lem:dim}.

\end{proof}

We shall see now  certain upper  and lower bounds on $t_i(R)$, which generalize \cite[Proposition 3]{mat}. 
\begin{prop}
\label{thm:tsnonresid}
For every 
$i \in
\{1,\dots,r+l\}$:
\begin{equation}
n_{i-1}\leq t_i(R)\leq t(R)\;n_{i-1}.
\end{equation}
\end{prop}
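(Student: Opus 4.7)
The plan is to handle the two inequalities by entirely different techniques. For the lower bound $t_i(R)\geq n_{i-1}$, I would apply Proposition \ref{prop:lungnotres} to the pair of fractional ideals $\mathfrak a_{i-1}^{-1}\subseteq \mathfrak a_i^{-1}\subseteq \overline R$, which writes
\begin{equation*}
t_i(R)=\sum_{r=0}^{s-1}\bigl[\dim_k V_{\mathfrak a_i^{-1}}(r)-\dim_k V_{\mathfrak a_{i-1}^{-1}}(r)\bigr]
\end{equation*}
as a sum of non-negative terms (the non-negativity coming from the containment $\mathfrak a_{i-1}^{-1}\subseteq \mathfrak a_i^{-1}$, which forces $V_{\mathfrak a_{i-1}^{-1}}(r)\subseteq V_{\mathfrak a_i^{-1}}(r)$). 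The decisive step is to isolate the single index $r=N-1-s_{i-1}$, which lies in the range of summation since $s_{i-1}<N$. At this value of $r$, Lemma \ref{lem:aidim} delivers precisely the two estimates $\dim_k V_{\mathfrak a_i^{-1}}(N-1-s_{i-1})=n$ and $\dim_k V_{\mathfrak a_{i-1}^{-1}}(N-1-s_{i-1})\leq n-n_{i-1}$, so this one term already contributes at least $n-(n-n_{i-1})=n_{i-1}$, and discarding the remaining non-negative contributions finishes the lower bound.

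For the upper bound $t_i(R)\leq t(R)\,n_{i-1}$ I would realize $\mathfrak a_i^{-1}/\mathfrak a_{i-1}^{-1}$ as a $k$-subspace of a naturally occurring Hom space. First I would verify $\m\,\mathfrak a_i^{-1}\subseteq \mathfrak a_{i-1}^{-1}$, which follows from the already-noted inclusion $\m\,\mathfrak a_{i-1}\subseteq \mathfrak a_i$ via $\m\,\mathfrak a_i^{-1}\cdot\mathfrak a_{i-1}=\mathfrak a_i^{-1}(\m\,\mathfrak a_{i-1})\subseteq \mathfrak a_i^{-1}\mathfrak a_i\subseteq R$; this makes $\mathfrak a_i^{-1}/\mathfrak a_{i-1}^{-1}$ a $k$-vector space whose $R$-length equals its $k$-dimension. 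Next I would introduce
\begin{equation*}
\phi:\mathfrak a_i^{-1}/\mathfrak a_{i-1}^{-1}\longrightarrow \mathrm{Hom}_k\bigl(\mathfrak a_{i-1}/\mathfrak a_i,\,\m^{-1}/R\bigr),\qquad \phi(\bar y)(\bar x)=yx+R.
\end{equation*}
Well-definedness needs $yx\in \m^{-1}$ for $y\in \mathfrak a_i^{-1}$ and $x\in \mathfrak a_{i-1}$, which follows from $\m\cdot yx=y(\m x)\subseteq y\,\mathfrak a_i\subseteq R$, while independence from representatives uses $y\,\mathfrak a_i\subseteq R$ on the right and $\mathfrak a_{i-1}^{-1}\mathfrak a_{i-1}\subseteq R$ on the left. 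Injectivity is immediate, since $\phi(\bar y)=0$ means $y\,\mathfrak a_{i-1}\subseteq R$, i.e.\ $y\in \mathfrak a_{i-1}^{-1}$. The target is a $k$-vector space of dimension $n_{i-1}\,t(R)$, hence of the same $R$-length, and the inequality follows.

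I do not expect any serious obstacle: once Lemma \ref{lem:aidim} is granted, the lower bound is a one-line consequence of Proposition \ref{prop:lungnotres}, and the upper bound is a clean generalization of the classical Hom-space argument from the residually rational case, with cardinalities replaced throughout by $k$-dimensions. The only point that requires a small verification is that $\mathfrak a_i^{-1}/\mathfrak a_{i-1}^{-1}$ and $\m^{-1}/R$ are genuinely $k$-vector spaces (both are annihilated by $\m$), so that lengths over $R$ really do coincide with dimensions over $k$ and the counting in the target of $\phi$ is unambiguous.
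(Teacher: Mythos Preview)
Your lower bound argument is exactly the paper's: apply Proposition~\ref{prop:lungnotres}, keep only the term at index $N-1-s_{i-1}$, and feed in the two estimates from Lemma~\ref{lem:aidim}.

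For the upper bound, however, you take a genuinely different route. The paper invokes \cite[Lemma~1]{bh}, which says that if $I_1\subseteq I_2$ with $I_2/I_1$ \emph{simple} then $\ell_R\bigl((R:I_1)/(R:I_2)\bigr)\le t(R)$; since $\mathfrak a_{i-1}/\mathfrak a_i$ has a composition series of length $n_{i-1}$, one applies that lemma $n_{i-1}$ times along a refinement and adds up. Your argument instead builds a single $k$-linear injection $\mathfrak a_i^{-1}/\mathfrak a_{i-1}^{-1}\hookrightarrow\mathrm{Hom}_k(\mathfrak a_{i-1}/\mathfrak a_i,\m^{-1}/R)$ and reads off the bound from the dimension of the target. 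Both arguments are correct. The paper's version is shorter on the page because it outsources the work to \cite{bh}, but it needs the external reference and an implicit filtration; your Hom-space embedding is self-contained, handles the full $n_{i-1}$-dimensional quotient in one stroke, and makes transparent why the factor $n_{i-1}$ appears. The verification that $\m\,\mathfrak a_i^{-1}\subseteq\mathfrak a_{i-1}^{-1}$ (so that the source is a $k$-vector space and $R$-lengths agree with $k$-dimensions) is correctly supplied, and the well-definedness and injectivity checks are all in order.
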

\begin{proof}

For showing the upper bound we shall use \cite[Lemma 1]{bh}. This affirms 
that, if we have two ideals of the ring $R$, $I_1$ and $I_2$ such that 
$I_1\subseteq I_2$ and $I_2/I_1$ is a simple $R-$module, then 
\begin{equation}
\ell_R(R:I_1/R:I_2)\leq t(R).
\end{equation}
We are using this result for our ideals $\mathfrak a_{i}\subseteq 
\mathfrak a_{i-1}$. The $R-$module $\mathfrak a_{i-1}/\mathfrak a_{i}$ is 
not 
simple, but it is in fact a $k-$vector space of finite dimension equal to  
$n_{i-1}$. Then we can apply \cite[Lemma 1]{bh}  $n_{i-1}$ times and we 
conclude the proof for the  upper bound.

 Now we want to show the lower bound. Using 
Proposition \ref{prop:lungnotres} we have that
\begin{equation}
\label{eqn:typeslow}
\ell_R(\mathfrak {a_i^{-1}}/\mathfrak {a_{i-1}^{-1}})
\geq {\rm dim}_k(V_{\mathfrak
{a_i^{-1}}}(N-1-s_{i-1}))-{\rm dim}_k(V_{\mathfrak {a_{i-1}^{-1}}}(N-1-s_{i-1})).
\end{equation}
By Lemma \ref{lem:aidim}, we get:
\begin{equation}
t_i(R):=\ell_R(\mathfrak {a_i^{-1}}/\mathfrak {a_{i-1}^{-1}})
\geq n-(n-n_{i-1})=n_{i-1}.
\end{equation}
\end{proof}

Similarly to the residually rational case,  
we can characterize  rings of minimal and maximal length by their type sequences.
\begin{thm}
\label{thm:ringschar}
Let $n_i={\rm dim}_k(\mathfrak a_i /\mathfrak a_{i-1})$. Then

\begin{itemize}
\item[1.]
$R$ is almost Gorenstein if and only if
\begin{equation}
{\rm t.s.}(R)=(t(R),n_1,n_2,\dots,n_{r+l-1}).
\end{equation}
\item[2.]
$R$ is of maximal length if and only if
\begin{equation}
{\rm t.s.}(R)=(t(R),t(R)n_1,t(R)n_2,\dots,t(R)n_{r+l-1}).
\end{equation}
\end{itemize}
\end{thm}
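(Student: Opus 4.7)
The idea is to reduce both statements to the termwise sharpness of the bounds $n_{i-1}\le t_i(R)\le t(R) n_{i-1}$ from Proposition~\ref{thm:tsnonresid}, combined with two bookkeeping identities. Telescoping the chain $R=\mathfrak a_0^{-1}\subseteq\cdots\subseteq\mathfrak a_{r+l}^{-1}=\overline R$ gives
\begin{equation}
\ell_R(\overline R/R)=\sum_{i=1}^{r+l}t_i(R),
\end{equation}
while the dual chain $R=\mathfrak a_0\supsetneq\mathfrak a_1\supsetneq\cdots\supsetneq\mathfrak a_{r+l}=C$, whose successive quotients $\mathfrak a_{i-1}/\mathfrak a_i=V_R(s_{i-1})$ are $k$-vector spaces of dimension $n_{i-1}$, yields
\begin{equation}
\ell_R(R/C)=\sum_{i=0}^{r+l-1}n_i.
\end{equation}
Note that $n_0=\dim_k R/\mathfrak m=1$.

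For part~1, I would sum the lower bound $t_i(R)\ge n_{i-1}$ over $i=2,\dots,r+l$ and add $t_1(R)=t(R)$ to obtain
\begin{equation}
\ell_R(\overline R/R)\ \ge\ t(R)+\sum_{i=1}^{r+l-1}n_i\ =\ t(R)+\ell_R(R/C)-1,
\end{equation}
which is precisely the almost Gorenstein lower bound recalled in the introduction. If $R$ is almost Gorenstein, the total inequality is an equality, and therefore each individual inequality $t_i(R)\ge n_{i-1}$ with $i\ge 2$ must be an equality, producing ${\rm t.s.}(R)=(t(R),n_1,\dots,n_{r+l-1})$. Conversely, that type sequence makes the sum attain the lower bound, so $R$ is almost Gorenstein.

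For part~2, I would symmetrically sum the upper bound $t_i(R)\le t(R) n_{i-1}$ over all $i$ to get
\begin{equation}
\ell_R(\overline R/R)\ \le\ t(R)\sum_{i=0}^{r+l-1}n_i\ =\ t(R)\,\ell_R(R/C),
\end{equation}
the maximal length inequality. Equality forces $t_i(R)=t(R) n_{i-1}$ for every $i$; since $n_0=1$ this is automatically compatible with $t_1(R)=t(R)$, and the remaining equalities give exactly the stated form of the type sequence. The converse is again immediate from the sum formula.

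I do not expect any serious obstacle: the argument is a double-indexed application of Proposition~\ref{thm:tsnonresid} together with the elementary identities for $\ell_R(\overline R/R)$ and $\ell_R(R/C)$ above. The only slightly delicate point arises in part~1, where the first entry $t_1(R)=t(R)$ must be fed in as a free input rather than coming from the generic lower bound $t_1(R)\ge n_0=1$ (which is typically far from sharp); this is what causes the discrepancy of $t(R)-1$ between the two ends of the inequality $\ell_R(R/C)+t(R)-1\le\ell_R(\overline R/R)$.
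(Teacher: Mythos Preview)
Your proposal is correct and follows essentially the same argument as the paper: both proofs telescope the two chains to get $\ell_R(\overline R/R)=\sum_i t_i(R)$ and $\ell_R(R/C)=\sum_i n_i$, then use the termwise bounds of Proposition~\ref{thm:tsnonresid} to see that the defining equalities for ``almost Gorenstein'' and ``maximal length'' hold precisely when each bound is sharp. Your explicit remark about isolating $t_1(R)=t(R)$ in part~1 (rather than using the weak bound $t_1(R)\ge n_0=1$) is exactly the point the paper handles by cancelling $t_1(R)$ against $n_0+t(R)-1$.
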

\begin{proof}
Recall that the almost Gorenstein property means (cf. 
\cite[Definition-Proposition 20]{bf})
that
\begin{equation}
\label{eqn:almGor}
\ell_R(\overline R/R)=\ell_R(R/\mathfrak
a_{r+l})+t(R)-1.
\end{equation}
Equation (\ref{eqn:almGor}) is equivalent to:
\begin{equation}
\sum_{i=1}^{r+l}t_i(R)=\sum_{i=0}^{r+l-1}n_i+t(R)-1
\end{equation}
As $n_0={\rm dim}_k(V_R(s_0))={\rm dim}_k(R/\m)=1$ and $t_1(R)=t(R)$,
the previous
equation is equivalent to:
\begin{equation}  
\sum_{i=2}^{r+l}t_i(R)=\sum_{i=1}^{r+l-1}n_i.
\end{equation}
Thus, the conclusion follows using Proposition \ref {thm:tsnonresid} which 
claims that $n_{i-1}\leq t_i$, so the previous equality holds if and only 
if 
$t_i=n_{i-1}$ for every $i$, $2\leq i\leq r+l$.

The ring $R$ is of maximal length if and only if
\begin{equation}
\label{eqn:maxleng}
\ell_R(\overline R/R)=t(R)\ell_R(R/\mathfrak
a_{r+l}).
\end{equation}
And this is further equivalent to
\begin{equation}
\sum_{i=1}^{r+l}t_i(R)=t(R)\sum_{i=0}^{r+l-1}n_i=\sum_{i=0}^{r+l-1}t(R)n_i.
\end{equation}
By Proposition \ref {thm:tsnonresid}, $t_i\leq t(R)n_{i-1}$ for every 
$i\in\{1,\dots, r+l-1\}$, and we know also that $n_0=1$, so the previous 
equality 
holds if and only if $t_i=t(R)n_{i-1}$ for every $i\in \{1,\dots, r+l\}$.
\end{proof}
As a consequence of   Theorem \ref{thm:ringschar},1, and of the 
fact 
that
a Gorenstein (Kunz) ring is an almost Gorenstein ring of type $1$
($2$, respectively), we can draw the following conclusion.
\begin{cor}
 
\begin{itemize}
\item[1.]
$R$ is Gorenstein  if and only if
${\rm t.s.}(R)=(1,n_1,\dots,n_{r+l-1})$,
\item[2.]
$R$ is Kunz  if and only if
${\rm t.s.}(R)=(2,n_1,\dots,n_{r+l-1})$.
\end{itemize}
\end{cor}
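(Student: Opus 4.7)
The plan is to reduce both parts to Theorem \ref{thm:ringschar}(1) via two standard facts: in our one-dimensional Cohen--Macaulay setting, $R$ is Gorenstein iff its CM type $t(R)$ equals $1$, and in that case $R$ is automatically almost Gorenstein, since the chain
\[\ell_R(R/C)+t-1 \le \ell_R(\overline R/R) \le \ell_R(R/C)\,t\]
recalled in the introduction collapses when $t=1$. Similarly, by the definition folded into the paper's hint, $R$ is Kunz precisely when it is almost Gorenstein with $t(R)=2$.

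For part 1, the forward direction is immediate: if $R$ is Gorenstein then $R$ is almost Gorenstein with $t(R)=1$, so Theorem \ref{thm:ringschar}(1) directly yields ${\rm t.s.}(R)=(1,n_1,\dots,n_{r+l-1})$. Conversely, if the type sequence has this shape, then the identity $t_1(R)=t(R)$ noted right after the definition of the type sequence forces $t(R)=1$, so $R$ is Gorenstein. Part 2 proceeds identically: from Kunz one obtains almost Gorenstein together with $t(R)=2$ and invokes Theorem \ref{thm:ringschar}(1) to read off the claimed sequence; conversely, a type sequence of the stated shape first gives $t(R)=t_1(R)=2$, and then matches the almost Gorenstein template of that same theorem, so $R$ is almost Gorenstein of type $2$, i.e.\ Kunz.

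I do not foresee any real obstacle; the corollary is essentially a bookkeeping step that couples the characterization of almost Gorenstein rings via their type sequence with the defining numerical constraints on $t(R)$ for the Gorenstein and Kunz properties. The only subtlety worth stating explicitly is that Gorenstein implies almost Gorenstein, which one gets for free from the length inequality above.
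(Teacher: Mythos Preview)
Your proposal is correct and mirrors the paper's own argument: the paper derives the corollary directly from Theorem \ref{thm:ringschar}(1) together with the fact that Gorenstein (resp.\ Kunz) means almost Gorenstein of type $1$ (resp.\ $2$), exactly as you do. The paper also records, as an aside, a direct verification of the Gorenstein case via duality with the canonical ideal, but this is offered as an alternative rather than the main proof.
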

Note that the proof for Gorensteiness could have been also obtained in a direct
manner. Indeed, if $R$ is Gorenstein, then the ring itself is a canonical ideal,
and so for every $i\in \{1,\dots,r+l\}$, 
\begin{eqnarray}\nonumber t_i(R) &=&
\ell_R((R:\mathfrak {a_i})/R:\mathfrak {a_{i-1}})\\ \nonumber &=&
\ell_R((R:(R:\mathfrak {a_{i-1}}))/(R:(R:\mathfrak {a_{i}})))=\ell_R(\mathfrak
{a_{i-1}}/\mathfrak {a_i})\\ &=& {\rm dim}_kV_R(s_{i-1})=n_{i-1}. 
\end{eqnarray}

\section{Examples}

{\bf Example 1}. Consider the following subring of the ring of power series $\mathbb Q(\sqrt2,\sqrt3)[[X]]$. 

$$R= \mathbb Q+X^3\mathbb Q(\sqrt2,\sqrt3)+X^4\mathbb Q(\sqrt2,\sqrt3)+X^5\mathbb Q(\sqrt2 )+X^6\mathbb Q(\sqrt2,\sqrt3)[[X]]$$

In this example, $k=\mathbb Q$ and $K= \mathbb Q(\sqrt2,\sqrt3)$. According to the notation of previous section, we have $n_0=1, n_1=4, n_2=4, n_3=2$. Moreover, since

 $\mathfrak a_1^{-1}= \mathbb Q(\sqrt2 )+X^3\mathbb Q(\sqrt2,\sqrt3)[[X]]$
 
 $\mathfrak a_2^{-1}= \mathbb Q(\sqrt2 )+X^2\mathbb Q(\sqrt2,\sqrt3)[[X]]$,
 
 $\mathfrak a_3^{-1}= \mathbb Q(\sqrt2 )+X \mathbb Q(\sqrt2,\sqrt3)[[X]]$,
 
 $\mathfrak a_4^{-1}=  \mathbb Q(\sqrt2,\sqrt3)[[X]]$,
 
 we get $t=t_1=3, t_2=4, t_3=4, t_4=2$, so that 
 the  type sequence is $(t,n_1,n_2,n_3)$ and the ring is almost Gorenstein.

\bigskip

{\bf Example 2} A ring of maximal length.

$$R=\mathbb R+X^3i\mathbb R+X^6 \mathbb R+X^9\mathbb C[[X]]$$

Here $k= \mathbb R$, $K= \mathbb C$, $n_0=n_1=n_2=1$ and since

 $\mathfrak a_1^{-1}=\mathbb R+X^3i\mathbb R+X^6 \mathbb C[[X]]$
 
  $\mathfrak a_2^{-1}=\mathbb R+X^3  \mathbb C[[X]]$

 $\mathfrak a_3^{-1}=\mathbb C[[X]]$

we get $t=t_1=5, t_2=5, t_3=5$, so that the type sequence is $(t, tn_1,tn_2)$ and the ring is of maximal length.

\bigskip

The examples above are generalized semigroup rings, GSR for short,  
i.e. rings of the form
$$k+XV_1+ \dots+X^{N-1}V_{N-1}+X^NK[[X]]$$
where $V_i$ are $k$-vector subspaces of $K$. 
To every one-dimensional analytically irreducible ring $R$ can be associated a GSR $\widetilde R$,
as in \cite{s}. More precisely  $\widetilde R$ is the subring of $K[[X]]$ defined, with the notation of the previous section,  as
$$ \widetilde R:= \sum_{i \geq0} V_R(i)X^i$$
That is in fact the generalization of the way of associating 
to $R$, in the residually rational case, 
the semigroup ring $k[[S]]$, where $S=v(R)$. 
Observe however that the type sequence of $R$ and its associated GSR 
is not always the same. For example, if $R=k[[X^4,X^6+X^7,X^{10}]]$,
with characteristic of $k$ unequal to $2$, then the associated GSR is 
$k[[X^4,X^6,X^{11},X^{13}]]$, which has type sequence $(3,1,1,1)$. 
On the other hand the type sequence of $R$ is 
$(2,2,1,1)$, in fact $\mathfrak a_1^{-1}$ contains no 
element with value 2, 
but $\mathfrak a_2^{-1}$ contains $X^2-X^3$.     
   However we can prove that: 
 
\begin{prop}
The ring $R$ is   almost Gorenstein if and only if 
the associated GSR $\widetilde R$ is almost Gorenstein and ${\rm 
{type}}(R)={\rm {type}}(\widetilde R)$.
\end{prop}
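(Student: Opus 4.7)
The plan is to exploit the identity $V_R(i)=V_{\widetilde R}(i)$, built into the definition of the associated GSR, in order to reduce the almost Gorenstein condition to a single equality between type numbers. Setting $L:=\ell_R(\overline R/R)$ and $L':=\ell_R(R/\mathfrak a_{r+l})$, I would first apply Proposition \ref{prop:lungnotres} to both $R\subseteq\overline R$ and $\widetilde R\subseteq\overline{\widetilde R}$ to get $L=\ell_{\widetilde R}(\overline{\widetilde R}/\widetilde R)$ and $L'=\ell_{\widetilde R}(\widetilde R/\widetilde{\mathfrak a}_{r+l})$, since only the common $V$-spaces enter the right-hand sides; the invariants $n_i$ likewise coincide for the two rings.

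With these identifications, the defining equality of almost Gorenstein, $\ell(\overline R/R)=\ell(R/\mathfrak a_{r+l})+t-1$, reads $t(R)=L-L'+1$ for $R$ and $t(\widetilde R)=L-L'+1$ for $\widetilde R$. Hence the condition \emph{$\widetilde R$ almost Gorenstein and $t(R)=t(\widetilde R)$} is exactly $t(R)=t(\widetilde R)=L-L'+1$, which makes the implication $(\Leftarrow)$ immediate. For $(\Rightarrow)$, once $R$ is almost Gorenstein we already have $t(R)=L-L'+1$, and the only remaining task is to establish $t(\widetilde R)=t(R)$.

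I would sandwich $t(\widetilde R)$ between two bounds. The upper bound is automatic: Proposition \ref{thm:tsnonresid} applied to $\widetilde R$ gives $t_i(\widetilde R)\ge n_{i-1}$, and since $\sum_{i=1}^{r+l}t_i(\widetilde R)=L$ while $\sum_{i=1}^{r+l-1}n_i=L'-1$ (using $n_0=1$), we obtain $t(\widetilde R)\le L-L'+1=t(R)$. The opposite inequality $t(\widetilde R)\ge t(R)$ is the crucial point, and should hold in full generality. To prove it, I would compare the $k$-subspaces $V_{\mathfrak m^{-1}}(i)$ and $V_{\widetilde{\mathfrak m}^{-1}}(i)$ of $K$ (both live in $K$ because $\mathfrak m^{-1}\subseteq\overline R$ and $\widetilde{\mathfrak m}^{-1}\subseteq\overline{\widetilde R}$). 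If $c\in K$ is the leading coefficient at $X^i$ of some $f\in\mathfrak m^{-1}$ and $m\in\mathfrak m$ has leading datum $w\in V_R(j)$, then $fm\in R$ forces $cw\in V_R(i+j)$, so $cV_R(j)\subseteq V_R(i+j)$ for every $j\in v(R)\cap\mathbb N_{\ge 1}$. But this is precisely the condition ensuring that the pure monomial $cX^i$ lies in $\widetilde{\mathfrak m}^{-1}$, since $\widetilde{\mathfrak m}=\sum_{j\ge 1}V_R(j)X^j$. Therefore $V_{\mathfrak m^{-1}}(i)\subseteq V_{\widetilde{\mathfrak m}^{-1}}(i)$ for every $i$, and combining Proposition \ref{prop:lungnotres} with $V_R(i)=V_{\widetilde R}(i)$ yields $t(R)\le t(\widetilde R)$.

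The main obstacle is this leading-term comparison: the numerical manipulations with $L$, $L'$, and the bounds from Proposition \ref{thm:tsnonresid} are routine, but verifying $V_{\mathfrak m^{-1}}(i)\subseteq V_{\widetilde{\mathfrak m}^{-1}}(i)$ requires carefully matching the product conditions $f\mathfrak m\subseteq R$ and $cX^i\cdot\widetilde{\mathfrak m}\subseteq\widetilde R$ degree by degree, including the contributions of large $j\ge N$ where $V_R(j)=K$ produces extra constraints on $c$ that must remain compatible with those coming from the small $j\in v(R)\cap\mathbb N_{\ge 1}$.
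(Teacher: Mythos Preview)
Your argument is correct, but it follows a different route from the paper's. The paper's proof is a three-line appeal to the canonical ideal: taking $\omega$ with $R\subseteq\omega\subseteq\overline R$, the associated $\widetilde\omega=\sum_{i\ge 0}V_\omega(i)X^i$ is canonical for $\widetilde R$ by \cite[Theorem~17]{s}, and then \cite[equation~(24) and Lemma~16]{s} give the chain $\ell_R(\omega/R)=\ell_{\widetilde R}(\widetilde\omega/\widetilde R)\ge\operatorname{type}(\widetilde R)-1\ge\operatorname{type}(R)-1$; almost Gorensteinness of $R$ is exactly the collapse of this chain to equalities. Your approach avoids canonical ideals entirely, working instead with the numerical identities $L=\ell_{\widetilde R}(\overline{\widetilde R}/\widetilde R)$, $L'=\ell_{\widetilde R}(\widetilde R/\widetilde{\mathfrak a}_{r+l})$ via Proposition~\ref{prop:lungnotres}, the lower bounds $t_i(\widetilde R)\ge n_{i-1}$ from Proposition~\ref{thm:tsnonresid}, and a direct leading-coefficient argument to show $V_{\mathfrak m^{-1}}(i)\subseteq V_{\widetilde{\mathfrak m}^{-1}}(i)$, which yields $t(R)\le t(\widetilde R)$. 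This last inequality is precisely what the paper imports from \cite[Lemma~16]{s}, so you are essentially re-proving that lemma in place. Your method is more self-contained within the present paper's framework and bypasses the canonical-ideal machinery; the paper's proof is shorter but leans on the cited results. Incidentally, the ``obstacle'' you flag for large $j\ge N$ is not one: since $\mathfrak m^{-1}\subseteq\overline R$ forces $i\ge 0$, we have $i+j\ge N$ and $V_R(i+j)=K$, so the constraint $cK\subseteq V_R(i+j)$ is vacuous.
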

\begin{proof} Let $\omega$ be a canonical ideal of $R$, $R \subseteq \omega \subseteq \overline R$ (cf. \cite{bf} for the definition and the existence). Then, by \cite[Theorem 17]{s},     $\widetilde \omega= \sum_{i \geq 0} V_{\omega} (i)X^i$       is a canonical ideal of $\widetilde R$. Moreover, by \cite [equation (24) and Lemma 16]{s}, we have:
\begin{equation} \ell_R(\omega/R)= \ell_{\widetilde R}(\widetilde \omega/\widetilde R) \geq {\rm type}(\widetilde R)-1 \geq {\rm type}( R)-1\end{equation}
By \cite[Definition-Proposition 20]{bf} we get that $R$ is almost Gorenstein if and only if $\ell_R(\omega/R)  \geq {\rm type}(  R)-1$. Thus  $R$ is almost Gorenstein if and only if both inequalities of (29) are equalities, that is if and only if $\widetilde R$ is almost Gorenstein  and  ${\rm 
{type}}(R)={\rm {type}}(\widetilde R)$.
 \end{proof}

We shall give now an example of computing the type sequence of a ring which 
is not a GSR and the length $\ell_R( \overline R/R)$ is not minimal nor maximal.

\bigskip

{\bf Example 3} Let $R=\mathbb R[[iX^3+X^4,X^5,iX^{10}+X^{11},X^{16}]]$. 
For this ring $k=\mathbb R$ and $K=\mathbb C.$ 
Observe that $R$ is not a GSR, but we can compute its associated GSR.
First let us try to compute the type sequence of $R$.
After some computations we can write $R$ as:\\
$R=\mathbb R+(iX^3+X^4)\mathbb R+X^5\mathbb R+(-X^6+2iX^7+X^8)\mathbb 
R+(iX^8+X^9)\mathbb R
+(-iX^9+3iX^{11}+X^{12})\mathbb R+X^{10}\mathbb 
R$+
$(iX^{10}+X^{11})\mathbb 
R+(-X^{11}+2iX^{12})\mathbb R
+(X^{12}-2X^{14})\mathbb R+X^{13}\mathbb R+(iX^{13}+X^{14})\mathbb 
R$+
$iX^{14}\mathbb R
+X^{15}\mathbb C[[X]]$.\\
Thus for the ring $R$ we have: the conductor of the ring $N=15$ and 
$n_0=1$, $n_1=1$, $n_2=1$, $n_3=1$, 
$n_4=1$, $n_5=1$, $n_6=2$, $n_7=1$, $n_8=1$, $n_9=2$, $n_{10}=1$.  
We compute now the inverses of the ideals which appear in the definition of the 
type sequence.\\
$\mathfrak m^{-1}=\mathbb R+(iX^3+X^4)\mathbb R+X^5\mathbb R+
(-X^6+2iX^7+X^8)\mathbb R+(X^7-2X^9)\mathbb R+(iX^8+X^9)\mathbb R
+(-iX^9+3iX^{11}+X^{12})\mathbb R+X^{10}\mathbb R+(iX^{10}+X^{11}\mathbb R
+X^{11}\mathbb R+X^{12}\mathbb C[[X]]$. \\
$\mathfrak a_2^{-1}=\mathbb R+(iX^3+X^4)\mathbb R+X^5\mathbb R+
(-X^6+2iX^7+X^8)\mathbb R+(X^7-2X^9)\mathbb R
+(iX^8+X^9)\mathbb R+X^9i\mathbb R+X^{10}\mathbb C[[X]]$.\\ 
$\mathfrak a_3^{-1}=\mathbb R+(iX^3+X^4)\mathbb R+X^5\mathbb R+
(-X^6+2iX^7+X^8)\mathbb R+X^7\mathbb R+(iX^7+X^8)\mathbb R
+X^8i\mathbb R+X^9\mathbb C[[X]]$.\\
$\mathfrak a_4^{-1}=\mathbb R+(iX^3+X^4)\mathbb R+X^5\mathbb R
+X^6\mathbb R+X^7\mathbb C[[X]]$.\\
$\mathfrak a_5^{-1}=\mathbb R+(iX^3+X^4)\mathbb R+X^5\mathbb R
+X^6\mathbb C[[X]]$.\\
$\mathfrak a_6^{-1}=\mathbb R+(iX^3+X^4)\mathbb R+X^5\mathbb C[[X]]$.\\
$\mathfrak a_7^{-1}=\mathbb R+(iX^2-X^3)\mathbb R
+X^3i\mathbb R+X^4\mathbb C[[X]]$\\
$\mathfrak a_8^{-1}=\mathbb R+X^2i\mathbb R
+X^3\mathbb  C[[X]]$.\\
$\mathfrak a_9^{-1}=\mathbb R+X^2\mathbb C[[X]]$.\\
$\mathfrak a_{10}^{-1}=\mathbb R+X\mathbb C[[X]]$.\\
$\mathfrak a_{11}^{-1}=\mathbb C[[X]]=\overline R$.\\
Then: $t_1=\ell_R(\mathfrak m^{-1}/R)=3$,
$t_2=1$, $t_3=2$, $t_4=1$,  $t_5=1$, $t_6=1$, $t_7=3$,
$t_8=1$, $t_9=1$, $t_{10}=2$, $t_{11}=1$.\\
The associated GSR of $R$ is $\widetilde R=\mathbb 
R[[iX^3,X^5,iX^{10},iX^{17}]]$.
After computations we have that the type sequence of $\widetilde R$ is 
$(t_1=3,t_2=1,t_3=2,t_4=1,t_5=1,t_6=1,t_7=3,t_8=1,t_9=1,t_{10}=2,t_{11}=1)$.
We observe that in this case the type sequence of $R$ is equal to the type 
sequence of its 
associated GSR.

\end{document}